\newtheorem{theorem}{Theorem}[section]
\newtheorem{corollary}[theorem]{Corollary}
\newtheorem{lemma}[theorem]{Lemma}
\newtheorem{proposition}[theorem]{Proposition}
\theoremstyle{definition}
\newtheorem{definition}[theorem]{Definition}
\newcommand{\Z}{{\mathbb Z}}
\newcommand{\R}{{\mathbb R}}
\newcommand{\C}{{\mathbb C}}
\newcommand{\D}{{\mathbb D}}
\newcommand{\X}{{\mathbb S}}
\renewcommand{\S}{{\mathbb S}}
\newcommand{\PP}{{\mathbb P}}
\newcommand{\E}{{\mathcal E}}
\newcommand{\GL}{{\mathrm{GL}}}
\newcommand{\SL}{{\mathrm{SL}}}
\newcommand{\UH}{{\mathcal{UH}}}
\title[Uniform Hyperbolicity and Spectra of CMV Matrices]{Characterizations of Uniform Hyperbolicity and Spectra of CMV Matrices}
\author[D.\ Damanik]{David Damanik}
\author[J.\ Fillman]{Jake Fillman}
\author[M.\ Lukic]{Milivoje Lukic}
\author[W.\ Yessen]{William Yessen}
\email{damanik@rice.edu}
\email{fillman@vt.edu}
\email{mlukic@math.toronto.edu}
\email{yessen@rice.edu}
\thanks{D.\ D.\ and J.\ F.\ were supported in part by NSF grants DMS--1067988 and DMS--1361625.}
\thanks{M.\ L.\ was supported in part by NSF grant DMS--1301582.}
\thanks{W.\ Y.\ was supported by the NSF Mathematical Sciences Postdoctoral Research Fellowship DMS-1304287}
\begin{document}
\maketitle

\begin{abstract}
We provide an elementary proof of the equivalence of various notions of uniform hyperbolicity for a class of $\GL(2,\C)$ cocycles and establish a Johnson-type theorem for extended CMV matrices, relating the spectrum to the set of points on the unit circle for which the associated Szeg\H{o} cocycle is not uniformly hyperbolic.
\end{abstract}

\section{Introduction}

This paper discusses an interface between dynamical systems and spectral theory. Specifically, there is an intimate relation between hyperbolicity of $\GL(2,\C)$ cocycles and the spectra of certain finite-difference operators on $\ell^2(\Z)$. This connection goes back to the landmark paper of Johnson \cite{J}, in which he characterizes the spectra of a class of self-adjoint linear differential operators as the set of energies for which the associated differential equation fails to admit an exponential dichotomy.

The present paper originally came about as the authors were writing \cite{DFLY}, which relies heavily on this connection to prove that the one-dimensional ferromagnetic Ising model does not exhibit a phase transition in the thermodynamic limit. In order to understand this particular interface between dynamics and spectra, we needed to make use of several different equivalent notions of uniform hyperbolicity. The equivalence of these notions is well-known to experts in the field, but we were unable to locate a satisfactorily complete proof of this fact in the literature, so we decided to write an appendix to \cite{DFLY} to accomplish precisely that goal. As we began to write, we realized that one could recover the aforementioned characterization of the spectra of CMV matrices in a very simple manner by way of generalized eigenfunctions, which led to a second appendix elucidating this point of view. In particular, even though this connection between resolvent sets and uniform hyperbolicity for CMV matrices is well-known (it is essentially \cite[Theorem~5.1]{GJ96}), we feel that the proof contained herein is a worthwhile addition to the literature as it is more constructive and elementary than Geronimo-Johnson's proof. Ultimately, the appendices were too long to be included in the published version of \cite{DFLY}, but we still feel that detailed proofs of these theorems should have a place in the literature.

Let us now state the results precisely. To fix notation, let $\C\PP^1$ denote the complex projective line, the set of one-dimensional complex subspaces of $\C^2$. The projective line is equipped with a natural angle metric defined by
$$
d(V,W)
=
\angle(V,W)
=
\arccos \big( | \langle v, w \rangle| \big),
\quad
v \in V, \, w \in W, \, \|v \| = \|w\| = 1,
$$
where $\langle \cdot, \cdot \rangle$ denotes the standard Hermitian inner product on $\C^2$.  This endows $\C\PP^1$ with a topology homeomorphic to that of the sphere $\S^2$.

Let $G \subseteq \GL(2,\C)$ denote the group of matrices whose determinant has absolute value one.  In particular, $ G $ contains the groups $\SL(2,\R)$, $\mathrm O(2)$, $\mathrm U(2)$, $\mathrm U(1,1)$, among others. If $M \in \GL(2,\C)$, then the action of $M$ on $\C^2$  descends to a  map on $\C\PP^1$ which we will denote by the same letter.

Now, fix a compact metric space $\Omega$, a homeomorphism $T: \Omega \to \Omega$, and a continuous map $A: \Omega \to G$. We define the associated cocycle $(T,A):\Omega \times \C^2 \to \Omega \times \C^2$ by
$$
(T,A) : (\omega,v) \mapsto (T\omega, A(\omega)v),
\quad
\omega \in \Omega, \, v \in \C^2.
$$
For $n \in \Z$, define $A^n : \Omega \to G$ by
$$
A^n(\omega)
=
\begin{cases}
A(T^{n-1}\omega) \cdots A(\omega) & n \geq 1 \\
I & n = 0 \\
A(T^n\omega)^{-1} \cdots A(T^{-1}\omega)^{-1} & n \leq -1
\end{cases},
$$
so that iterates of the skew product obey $(T,A)^n = (T^n,A^n)$. The following definition collects the various notions which we will relate to hyperbolicity of $(T,A)$.

\begin{definition}
We say that a cocycle $(T,A)$ exhibits \emph{uniform exponential growth} if there are constants, $C>0$, $\lambda>1$ with the property that
\begin{equation} \label{e.exp.growth}
\| A^n(\omega) \|
\geq
C \lambda^{|n|}
\end{equation}
for all $n \in \Z$ and $\omega \in \Omega $.

We say that $(T,A)$ admits an \emph{invariant exponential splitting} if there exist constants $c > 0, L > 1 $ and continuous maps $ \Lambda^s,\Lambda^u : \Omega \to \C \PP^1 $ such that the following statements hold.
\begin{itemize}
\item[{\rm (a)}] (Invariance) For all $\omega \in \Omega$, one has
\begin{equation} \label{eq:mnflds.inv}
A(\omega) \Lambda^s (\omega) = \Lambda^s(T\omega), \quad A(\omega)\Lambda^u (\omega) = \Lambda^u(T\omega).
\end{equation}

\item[{\rm (b) }] (Contraction) For all $n \in \Z_+$, $\omega\in\Omega$, $v_s \in \Lambda^s(\omega)$, and $ v_u \in \Lambda^u(\omega) $, one has
\begin{equation} \label{e.exp.dichotomy}
\| A^n   (\omega) v_s \| \leq c L^{-n} \|v_s\|,
\quad
\| A^{-n}(\omega) v_u \| \leq c L^{-n} \|v_u\|.
\end{equation}
\end{itemize}
One can paraphrase \eqref{e.exp.dichotomy} by saying that vectors in $\Lambda^s$ decay exponentially in forward time, while vectors in $\Lambda^u$ decay exponentially in reverse time.

Lastly, we say that $(T,A)$ enjoys a \emph{Sacker-Sell solution} if there exist $\omega \in \Omega$ and $v \in \S^3 := \left\{ v \in \C^2 : \|v\|=1 \right\} $  so that
\[
\|A^n(\omega) v \| \leq 1
\]
for every $ n \in \Z$  (compare \cite{sacksell1}, \cite{sacksell2}, and \cite{selgrade}).
\end{definition}

\begin{theorem} \label{t.uh.splitting}
Let $(T,A)$ be defined as above.  Then the following are equivalent.

\begin{itemize}
\item[\rm (a)] $(T,A)$ exhibits uniform exponential growth.

\item[\rm (b)] $(T,A)$ admits an invariant exponential splitting.

\item[\rm (c)] $(T,A)$ does not enjoy a Sacker-Sell solution.
\end{itemize}

Whenever {\rm (a)}, {\rm (b)}, and {\rm (c)} hold, there is a uniform constant $ \gamma > 0 $ so that $ d(\Lambda^s(\omega), \Lambda^u(\omega)) \geq \gamma $ for every $\omega$.

\end{theorem}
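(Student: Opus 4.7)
The plan is to prove the cycle $(\text{b}) \Rightarrow (\text{a}) \Rightarrow (\text{b})$, giving $(\text{a}) \Leftrightarrow (\text{b})$, together with $(\text{b}) \Rightarrow (\text{c}) \Rightarrow (\text{a})$ to bring in $(\text{c})$; the uniform angle bound will be built into the construction of the splitting. Two persistent tools are (i) the determinant-one constraint, which forces $s_1(A^n(\omega))\,s_2(A^n(\omega)) = 1$ and $s_1(A^n(\omega)) = \|A^n(\omega)\|$, so that a single norm bound controls both singular values simultaneously; and (ii) compactness of $\Omega$ and $\Omega \times \S^3$, which lets me pass from pointwise to uniform estimates.

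The easy direction $(\text{b}) \Rightarrow (\text{a})$ is as follows. For a unit vector $v \in \Lambda^u(\omega)$, invariance puts $w_n := A^{-n}(\omega) v$ in $\Lambda^u(T^{-n}\omega)$ with $\|w_n\| \leq c L^{-n}$, and $A^n(T^{-n}\omega) w_n = v$ yields $\|A^n(T^{-n}\omega)\| \geq L^n/c$. Since $\omega$ is arbitrary, $\|A^n(\omega)\| \geq L^n/c$ for $n \geq 0$, and the symmetric argument with $\Lambda^s$ handles $n < 0$. The same reasoning shows, as a byproduct, that vectors in $\Lambda^u(\omega)$ expand at rate at least $L^n/c$ in forward time, a fact I will reuse below.

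For $(\text{a}) \Rightarrow (\text{b})$ I would construct the splitting as limits of singular vectors. Let $r_2^{(n)}(\omega) \in \C\PP^1$ denote the right singular direction of $A^n(\omega)$ corresponding to its smaller singular value. Combining the cocycle identity $A^{n+1}(\omega) = A(T^n\omega) A^n(\omega)$ with the singular-value ratio $s_1^{(n)}/s_2^{(n)} = (s_1^{(n)})^2 \geq (C\lambda^n)^2$ (from UEG and $s_1 s_2 = 1$), the sequence $(r_2^{(n)}(\omega))_n$ is Cauchy in $\C\PP^1$ at an exponential rate, uniformly in $\omega$. Setting $\Lambda^s(\omega) := \lim_n r_2^{(n)}(\omega)$ and defining $\Lambda^u$ symmetrically from $A^{-n}(\omega)$ produces continuous, invariant line fields with the required exponential decay. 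They are transversal at every $\omega$, since a common direction would be simultaneously exponentially contracted and expanded by $A^n(\omega)$; continuity and compactness then upgrade this to a uniform bound $d(\Lambda^s(\omega), \Lambda^u(\omega)) \geq \gamma > 0$.

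For $(\text{b}) \Rightarrow (\text{c})$ a hypothetical Sacker-Sell solution $v$ decomposes as $v_s + v_u \in \Lambda^s(\omega) \oplus \Lambda^u(\omega)$; the uniform angle bound gives $\|A^n(\omega) v\| \geq \sin(\gamma)\|A^n(\omega) v_u\|$, and the forward expansion of $\Lambda^u$ noted above forces $\|v_u\| \leq c/(L^n \sin\gamma) \to 0$, so $v_u = 0$; symmetrically $v_s = 0$, contradicting $\|v\| = 1$. The real technical obstacle is $(\text{c}) \Rightarrow (\text{a})$. Compactness and continuity first upgrade the pointwise statement ``$\sup_n \|A^n(\omega) v\| > 1$ for every $(\omega, v) \in \Omega \times \S^3$'' to a quantitative single-step form: there exist $N \in \N$ and $\eta > 1$ with $\max_{|k| \leq N}\|A^k(\omega) v\| \geq \eta$ for every such $(\omega, v)$. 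Turning this finite-time gap into a uniform exponential one is the main hurdle; I would attempt a Selgrade-type cone-field bootstrap, dualizing a forward-expanding cone into a backward-expanding one via $s_1 s_2 = 1$ and iterating the gap $\eta$ to propagate the expansion exponentially in $\omega$. I expect this propagation step, rather than any single algebraic manipulation, to be the heart of the proof.
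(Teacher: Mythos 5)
Your overall architecture \((\mathrm{b})\Rightarrow(\mathrm{a})\), \((\mathrm{a})\Rightarrow(\mathrm{b})\), \((\mathrm{b})\Rightarrow(\mathrm{c})\), \((\mathrm{c})\Rightarrow(\mathrm{a})\) is sound, your \((\mathrm{b})\Rightarrow(\mathrm{a})\) and \((\mathrm{b})\Rightarrow(\mathrm{c})\) arguments are correct, and your singular-direction construction of \(\Lambda^s,\Lambda^u\) in \((\mathrm{a})\Rightarrow(\mathrm{b})\) is exactly the paper's. But there are two genuine gaps.

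The first and more serious is the sentence asserting that the construction ``produces continuous, invariant line fields \emph{with the required exponential decay},'' from which you then derive transversality. This has the causality backwards, and the decay claim is in fact not a direct consequence of the Cauchy estimate. What you actually get from \(s_1 s_2 = 1\) and \(\|A^n(\omega)\| \geq C\lambda^n\) is
\[
d\bigl(\Lambda^s(\omega),\, r_2^{(n)}(\omega)\bigr) \lesssim \lambda^{-2n},
\]
and hence, for a unit vector \(v_s\in\Lambda^s(\omega)\),
\[
\|A^n(\omega)v_s\| \;\lesssim\; \|A^n(\omega)\|\,\lambda^{-2n} + \|A^n(\omega)\|^{-1}.
\]
The second term decays, but the first need not: \(\|A^n(\omega)\|\) is bounded above only by \(B^n\) with \(B=\max_\omega\|A(\omega)\|\), and if \(B>\lambda^2\) the right-hand side blows up. The lower bound on \(\|A^n\|\) provides no useful upper bound on it. This is precisely the subtlety the paper is written to repair (see the Introduction's remark that Yoccoz's \cite{Y04} proof of \((\mathrm{a})\Rightarrow(\mathrm{b})\) omits the decay estimates). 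The correct order of battle is the reverse of yours: one must first prove \(\Lambda^s(\omega)\neq\Lambda^u(\omega)\), \emph{without} the decay estimates, by a more delicate argument -- the paper does this by fixing an \(\omega_0\), choosing a near-optimal growth exponent \(R\) with \(\lambda\leq R < (\sup M)\) and then exploiting a subsequence \(n_j,k_j\) along which \(\|A^{n_j}(T^{k_j}\omega_0)\|\leq CR^{3n_j/2}\), so that along \emph{that} subsequence the contraction of \(v_s\) and the expansion of \(v_u\) can both be extracted. Only after that, using compactness and continuity to get uniform separation \(\gamma>0\), do the decay estimates for \(\Lambda^s\) (and symmetrically \(\Lambda^u\)) follow, by writing \(\|A^{-n}(T^n\omega)v_s(T^n\omega)\|\) in terms of the angle \(\theta_n\geq \gamma - O(\lambda^{-2n})\) and then reversing time via \(A^{-n}(T^n\omega)=A^n(\omega)^{-1}\).

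The second gap is in \((\mathrm{c})\Rightarrow(\mathrm{a})\), which you explicitly flag as unfinished. You correctly extract the finite-time gap \(N,\eta\) from compactness of \(\Omega\times\S^3\), but the proposed ``Selgrade-type cone-field bootstrap'' is left as an intention rather than an argument, and it is not clear it would be elementary or even go through without essentially reproving the dichotomy. The paper's route is considerably more pedestrian and worth absorbing: first, chaining the gap gives, for every \((\omega,v)\), a \emph{monotone} sequence \((n_k)\) with \(n_0=0\), \(|n_k-n_{k-1}|\leq N\), and \(\|A^{n_k}(\omega)v\|\geq(1+\varepsilon)^k\) (monotonicity by passing to a subsequence of the chain, which must escape to \(+\infty\) or \(-\infty\)). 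Second -- and this is the crux you are missing -- one must show that \emph{some} unit vector \(v_+\) admits an \emph{increasing} such sequence. This is done by contradiction: if every \(v\) had a decreasing sequence, then interpolation would give \(\|A^{-n}(\omega)v\|\gtrsim(1+\varepsilon)^{n/N}\) \emph{uniformly} over all unit \(v\), forcing both singular values of \(A^{-n}(\omega)\) to exceed \(1\) for large \(n\), which contradicts \(|\det A^{-n}(\omega)|=1\). Once the increasing sequence exists for some \(v_+\), one interpolates (\(\|A^{n_k+r}(\omega)\|\geq\|A^{n_k}(\omega)\|B^{-r}\)) to get \eqref{e.exp.growth} for \(n\geq 0\), and then uses the identity \(A^{-n}(\omega)=A^n(T^{-n}\omega)^{-1}\) together with \(\|M^{-1}\|=\|M\|\) for \(|\det M|=1\) to handle \(n<0\). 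Your instinct that the determinant-one constraint is one of the two key tools is exactly right -- but note that it is used here to rule out simultaneous expansion of all directions, not to dualize a cone field.

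One small point on \((\mathrm{b})\Rightarrow(\mathrm{c})\): the definition of ``invariant exponential splitting'' does not a priori include the uniform angle bound \(\gamma\); you should observe that \(\Lambda^s(\omega)\neq\Lambda^u(\omega)\) follows from the decay estimates alone (a common unit vector \(v\) would satisfy \(1=\|A^{-n}(T^n\omega)A^n(\omega)v\|\leq c^2L^{-2n}\to 0\)), and then invoke continuity and compactness for uniformity -- this is a two-line fix and your argument otherwise stands.
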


\begin{definition}
Whenever one (and hence all) of the conditions above hold, we say that the cocycle $(T,A)$ is \emph{uniformly hyperbolic}, which we sometimes denote by $(T,A) \in \UH $.
\end{definition}

Let us briefly comment on the literature. The main elements of Theorem~\ref{t.uh.splitting} can be found in, for example, \cite{BG}, \cite{Y04}, and \cite{Z} (some similar ideas can also be found in \cite[Theorem 8.1]{LS99}, though the authors in this case are mainly interested in subordinacy).  However, we feel that our presentation does fill a gap left by other work. In particular, the canonical reference for Theorem \ref{t.uh.splitting}, \cite[Proposition 2]{Y04}, does not prove the exponential decay estimates in the implication (a)$\implies$(b). Additionally, although \cite[Theorem A]{BG} is much more general than the equivalence (a)$\iff$(b) in our statement of Theorem~\ref{t.uh.splitting}, their proof uses some potent weaponry from ergodic theory, while our proof is entirely constructive and deterministic. Large portions of our exposition in Section~\ref{s:uh} are inspired by Zhang's preprint -- however, \cite{Z} appeals to an invariant cone criterion, while our argument is more self-contained.

We may use the characterizations of hyperbolicity to study the spectra of two-sided CMV matrices, which are unitary operators on $\ell^2(\Z)$ constructed as follows. Take a sequence $(\alpha_n)_{n\in\Z}$, where $\alpha_n \in \D := \{z \in \C : |z| < 1 \}$ for each $n \in \Z$. The corresponding \emph{extended CMV matrix} enjoys the following matrix representation with respect to the standard basis of $\ell^2(\Z)$:
$$
\E
=
\begin{pmatrix}
\ddots & \ddots & \ddots &&&&&  \\
 & -\overline{\alpha_0}\alpha_{-1} & \overline{\alpha_1}\rho_0 & \rho_1\rho_0 &&& & \\
& -\rho_0\alpha_{-1} & -\overline{\alpha_1}\alpha_0 & -\rho_1\alpha_0 &&& & \\
&  & \overline{\alpha_2}\rho_1 & -\overline{\alpha_2}\alpha_1 & \overline{\alpha_3}\rho_2 & \rho_3\rho_2 & & \\
& & \rho_2\rho_1 & -\rho_2\alpha_1 & -\overline{\alpha_3}\alpha_2 & -\rho_3\alpha_2 &  &  \\
&&&& \overline{\alpha_4}\rho_3 & -\overline{\alpha_4}\alpha_3 & \overline{\alpha_5}\rho_4 &\\
&&&& \rho_4\rho_3 & -\rho_4\alpha_3 & -\overline{\alpha_5}\alpha_4 &   \\
& &&&& \ddots & \ddots &  \ddots
\end{pmatrix},
$$
where $\rho_n = \left( 1 - |\alpha_n|^2 \right)^{1/2}$, and the terms of the form $-\overline{\alpha_n}\alpha_{n-1}$ comprise the main diagonal, i.e., $\langle \delta_n, \E \delta_n \rangle = -\overline{\alpha_n}\alpha_{n-1}$ for $n \in \Z$. A particularly interesting situation is the case in which the $\alpha$'s are obtained by sampling along the orbits of some topological dynamical system. Specifically, let $\Omega$ and $T$ be as before, and suppose $f:\Omega \to \D$ is continuous. For each $\omega \in \Omega$, one obtains a corresponding extended CMV matrix $\E_\omega$ defined by $\alpha_\omega(n) = f(T^n \omega)$. The study of $\E_\omega$ is intimately related to the Szeg\H{o} cocycle, defined by $A_z(\omega) = S(f(\omega),z)$, where
$$
S(\alpha,z)
=
\frac{1}{\rho}
\begin{pmatrix}
z & - \overline{\alpha} \\
-\alpha z & 1
\end{pmatrix},
\quad
\rho
=
\left( 1 -|\alpha|^2 \right)^{1/2},
\quad
\text{for } \alpha \in \D, \, z \in \partial \D.
$$
Specifically, we have the following theorem.
\begin{theorem}\label{t:minspec:johnson}
If $(\Omega,T)$ is minimal, then there is a uniform compact set $\Sigma \subseteq \partial \D$ with $\sigma(\E_\omega) = \Sigma$ for every $\omega \in \Omega$.  Moreover, this uniform spectrum is characterized as $\Sigma = \partial \D \setminus U$, where
$$
U
=
\{ z \in \partial \D : (T,A_z) \in \UH \}.
$$
\end{theorem}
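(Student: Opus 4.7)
The strategy rests on a dictionary translating orbits of the Szeg\H{o} cocycle $(T, A_z)$ into formal solutions of $\E_\omega u = zu$: an element $u \in \C^\Z$ satisfies the eigenvalue equation precisely when a naturally associated sequence of two-vectors $v_n \in \C^2$, built from consecutive entries of $u$ via the standard conventions dictated by the $\mathcal{L}\mathcal{M}$ factorization of $\E$, obeys $v_{n+1} = A_z(T^n\omega) v_n$. Each direction of the theorem then reduces to translating cocycle data into $\ell^2$ information about $\E_\omega$.

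For $\partial \D \setminus U \subseteq \Sigma$, assume $(T, A_z) \notin \UH$. Theorem \ref{t.uh.splitting}(c) yields $\omega_0 \in \Omega$ and a unit $v_0$ with $\|A_z^n(\omega_0) v_0\| \leq 1$ for all $n \in \Z$; the dictionary converts this into a nonzero bounded $u$ solving $\E_{\omega_0} u = zu$. A Schnol-type argument --- truncations $u \chi_{[-N,N]}$ serve as approximate eigenvectors, since $(\E_{\omega_0} - z)$ applied to them is supported near $\pm N$ with norm bounded uniformly in $N$, while either the $\ell^2$ norm of the truncation tends to infinity along a subsequence, or else $u$ itself lies in $\ell^2$ and is a genuine eigenvector --- gives $z \in \sigma(\E_{\omega_0})$. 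To pass to arbitrary $\omega$, I would combine norm continuity of $\omega \mapsto \E_\omega$ (so by normality the spectrum is Hausdorff-continuous in $\omega$), minimality of $T$, and the standard fact that $\sigma(\E_{T^n\omega}) = \sigma(\E_\omega)$ for every $n$ (by bilateral-shift conjugation when $n$ is even, and by $\sigma(\mathcal{L}\mathcal{M}) = \sigma(\mathcal{M}\mathcal{L})$ when $n$ is odd). This delivers both $\sigma(\E_\omega) \equiv \Sigma$ and the desired inclusion.

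For the reverse inclusion, assume $(T, A_z) \in \UH$ and fix $\omega \in \Omega$. Theorem \ref{t.uh.splitting}(b) supplies continuous invariant lines $\Lambda^s(\omega), \Lambda^u(\omega)$ with exponential contraction in forward and reverse time respectively, and the last sentence of Theorem \ref{t.uh.splitting} provides the uniform transversality $d(\Lambda^s(\omega), \Lambda^u(\omega)) \geq \gamma > 0$. Choosing nonzero vectors in each line and iterating the cocycle, the dictionary produces solutions $u^+_\omega, u^-_\omega$ of $\E_\omega u = zu$ that decay exponentially at $+\infty$ and $-\infty$, respectively; their Szeg\H{o}-type Wronskian is bounded away from zero by transversality. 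Assembling an explicit Green's function $G_\omega(n, m; z)$ from $u^\pm_\omega$ divided by the Wronskian (with the correct parity-adjusted convention for which function appears at which index), I would verify $(\E_\omega - z) G_\omega(\cdot, m; z) = \delta_m$ directly against the five-diagonal structure of $\E_\omega$, and then use the exponential decay of $u^\pm_\omega$ together with the Schur test to conclude that the resulting operator is bounded on $\ell^2(\Z)$; hence $z$ lies in the resolvent set. The principal obstacle throughout is the index-bookkeeping: nailing down the dictionary, the Wronskian, and the Green's function formula against the alternating even/odd CMV structure requires careful tracking of parities, though each individual calculation is routine.
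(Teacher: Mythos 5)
There is a genuine gap at the very foundation of your plan: the Szeg\H{o} cocycle $(T,A_z)$ does \emph{not} generate the solutions of $\E_\omega u = zu$. You assert that pairs built from consecutive entries of $u$ obey $v_{n+1} = A_z(T^n\omega)v_n$, but this is false; the Szeg\H{o} matrices generate the recursion for orthogonal polynomials, not for formal eigensolutions of the extended CMV operator. The correct transfer matrices are the Gesztesy--Zinchenko matrices $P(\alpha,z)$ and $Q(\alpha,z)$, which propagate the auxiliary pair $(u_n, v_n)$ with $v = \mathcal{M}u$ (not consecutive entries of $u$ alone). The subtlety is repairable: since $|z|=1$, one has $S(\alpha,z)S(\beta,z) = z\,Q(\alpha,z)P(\beta,z)$, so the two-step Szeg\H{o} cocycle and the two-step Gesztesy--Zinchenko cocycle differ only by a unimodular phase and hence have identical uniform hyperbolicity behavior. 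This identity is exactly what makes the theorem true as stated in terms of $A_z$, but it must be supplied; as written, your dictionary would produce sequences that are not solutions of $\E_\omega u = zu$, and both inclusions collapse. Once you swap in the Gesztesy--Zinchenko cocycle, the remainder of the $\partial\D\setminus U \subseteq \Sigma$ direction (bounded orbit $\Rightarrow$ bounded formal solution $\Rightarrow$ Weyl sequence) is essentially the paper's argument, and your handling of $\omega$-invariance of the spectrum via shifts, $\sigma(\mathcal{L}\mathcal{M})=\sigma(\mathcal{M}\mathcal{L})$, and minimality is a valid route.

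For the direction $\Sigma \subseteq \partial\D\setminus U$, your Green's function construction from exponentially decaying solutions $u^\pm_\omega$ and the Schur test is a legitimate classical argument, but it is considerably heavier than what is needed. The paper instead uses its Schnol theorem: uniform hyperbolicity (of the Gesztesy--Zinchenko cocycle) forces every nontrivial formal solution to grow exponentially on at least one half-line, so $z$ is not a generalized eigenvalue of $\E_\omega$; this gives $\mathcal{G}_\omega \subseteq \partial\D\setminus U$, and since $\UH$ is an open condition (robust under perturbation of the cocycle), $\partial\D\setminus U$ is closed, whence $\sigma(\E_\omega) = \overline{\mathcal{G}_\omega} \subseteq \partial\D\setminus U$. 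This sidesteps the Wronskian and the parity bookkeeping of the five-diagonal Green's function entirely, which you correctly flag as the main nuisance of your route. If you do pursue the Green's function, you would still need the corrected dictionary, and you should note that your Wronskian lower bound comes from the uniform separation $\gamma$ supplied by Theorem~\ref{t.uh.splitting} applied to the Gesztesy--Zinchenko cocycle, not the Szeg\H{o} cocycle.
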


For Schr\"odinger operators, certain spectral characterizations are more naturally made in terms of generalized eigenfunctions and others in terms of transfer matrices, but the two are very closely related since generalized eigenfunctions are generated by the transfer matrices. However, for CMV matrices, the recursion relation for generalized eigenfunctions is not given by the standard transfer matrices (generated by the Szeg\H o cocycle), but instead by transfer matrices generated by the Gesztesy--Zinchenko \cite{GZ06} cocycle. In our proof of Theorem 2, we note a simple relation between the two cocycles which ultimately allows us to relate uniform hyperbolicity of the Szeg\H o cocycle to exponential asymptotics of generalized eigenfunctions and prove Theorem~\ref{t:minspec:johnson}. This relation has other applications; in \cite{LO}, it will be used to conclude boundedness of generalized eigenfunctions from a Pr\"ufer variable approach adapted to the Szeg\H o cocycle. In \cite{DFO}, it will be used to relate the behavior of Szeg\H{o} transfer matrices to quantitative spreading estimates for quantum walks.

The structure of the paper is as follows. In Section~\ref{s:uh}, we prove Theorem~\ref{t.uh.splitting}. In Section~\ref{s:genef} we first present a CMV version of a standard result for Schr\"odinger operators which characterizes the spectrum as the closure of the set of generalized eigenvalues. The proofs are simple modifications of those in the Schr\"odinger case, but we opted to present them here for the convenience of the reader. It is worth pointing out that this discussion of generalized eigenfunctions is a special case of general, powerful results; see \cite{B}. Finally, we then use this characterization of the spectrum together with Theorem \ref{t.uh.splitting} to prove Theorem~\ref{t:minspec:johnson}.

\section{Uniform hyperbolicity} \label{s:uh}

Given a cocycle $(T,A)$, it is helpful to be able to relate behavior of $A^n$ on the right half-line to its behavior on the left half-line and vice versa. In that regard, the identity
\begin{equation} \label{eq:cocyc:rev}
A^{-n}(T^n \omega)
=
A^n(\omega)^{-1}
\end{equation}
turns out to be highly useful.  We also note that $ \|M^{-1}\| = \|M\| $ for $2\times 2$ matrices $M$ with $|\det(M)|=1$.

To prove Theorem~\ref{t.uh.splitting}, we will make use of the following simple variant of the singular value decomposition.

\begin{lemma} \label{l.sing.mnflds}
Given $A \in G$ such that $ \|A\|> 1 $, there exist complex lines $ S = S(A)$, $U = U(A) \in \C\PP^1 $ so that
\begin{equation} \label{eq:sing.mnflds.scaling}
\| A v_s \| = \|A\|^{-1} \|v_s\|, \quad
\| A v_u \| = \|A\|      \|v_u\|
\end{equation}
for all $v_s \in S, v_u \in U$.  Moreover, $S$ and $ U $ are orthogonal and satisfy
\begin{equation} \label{eq:sing.mnflds.mult}
A \cdot S(A) = U\left( A^{-1} \right), \quad
A \cdot U(A) = S\left( A^{-1} \right).
\end{equation}
Additionally, given $V \in \C\PP^1$, if $ \|A v\| = R \|v\| $ for $v \in V$, then  $\theta = \angle\left(V, S \right)$  satisfies
\begin{equation} \label{e.sing.mnfld.pert}
\sqrt{ \frac{R^2 - \|A\|^{-2}}{\|A\|^2 - \|A\|^{-2}}}
\leq
\theta
\leq
\frac{\pi}{2}\sqrt{ \frac{R^2 - \|A\|^{-2}}{\|A\|^2 - \|A\|^{-2}}}.
\end{equation}
A similar estimate holds for $U$.  In particular, the mappings $ S,U:G \setminus \mathrm{U}(2) \to \C\PP^1 $ are continuous.
\end{lemma}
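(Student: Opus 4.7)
The plan is to derive everything from the singular value decomposition. Since $A \in G$ satisfies $|\det A| = 1$, writing $A = W_1 \Sigma W_2^*$ with $\Sigma = \operatorname{diag}(\sigma_1,\sigma_2)$ and $\sigma_1 \geq \sigma_2 > 0$ forces $\sigma_1 \sigma_2 = 1$, so $\sigma_1 = \|A\|$ and $\sigma_2 = \|A\|^{-1}$. First I would define $S(A)$ to be the line spanned by the second column of $W_2$ and $U(A)$ the line spanned by the first; equivalently, these are the eigenspaces of the self-adjoint operator $A^*A$ corresponding to the eigenvalues $\|A\|^{-2}$ and $\|A\|^2$, respectively. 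Orthogonality $S \perp U$ is then automatic from the spectral theorem (the eigenvalues are distinct thanks to $\|A\|>1$), and the scaling relations \eqref{eq:sing.mnflds.scaling} come from evaluating $\|Av\|^2 = \langle v, A^*A v\rangle$ on each eigenspace.

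Next, to establish the involution relations \eqref{eq:sing.mnflds.mult}, I would exploit the identity $\|A^{-1}\| = \|A\|$ already recorded in the paper. For $v_s \in S(A)$, a one-line computation gives $\|A^{-1}(Av_s)\| = \|v_s\| = \|A\|\,\|Av_s\| = \|A^{-1}\|\,\|Av_s\|$, so $Av_s$ realises the maximal stretch of $A^{-1}$ and therefore spans $U(A^{-1})$. The other identity follows from the analogous computation starting with $v_u \in U(A)$.

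For the angle estimate \eqref{e.sing.mnfld.pert}, I would decompose an arbitrary $v\in V$ along the orthogonal sum $S \oplus U$ as $v = v_s + v_u$; the definition of the angle metric then immediately yields $\cos\theta = \|v_s\|/\|v\|$ and $\sin\theta = \|v_u\|/\|v\|$, where $\theta = \angle(V,S)$. Since $Av_s \in U(A^{-1})$ and $Av_u \in S(A^{-1})$ are themselves orthogonal, a second Pythagorean computation produces $R^2 \|v\|^2 = \|Av\|^2 = \|A\|^{-2}\|v_s\|^2 + \|A\|^2 \|v_u\|^2$, which rearranges to $\sin^2\theta = (R^2 - \|A\|^{-2})/(\|A\|^2 - \|A\|^{-2})$. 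The elementary sandwich $\tfrac{2}{\pi}\theta \leq \sin\theta \leq \theta$ on $[0,\pi/2]$ then delivers both sides of \eqref{e.sing.mnfld.pert}; the argument for $U$ is parallel, with $A^{-1}$ in place of $A$.

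For the final continuity statement, note that on $G\setminus \mathrm{U}(2)$ the condition $\|A\|>1$ provides a strict spectral gap for $A^*A$, so standard perturbation theory for self-adjoint $2\times 2$ matrices delivers continuous dependence of the two spectral projectors on $A$, and hence of $S(A),U(A)\in\C\PP^1$. I do not anticipate any real obstacle here; the one mildly subtle point is the simultaneous orthogonality of $\{v_s,v_u\}$ and $\{Av_s,Av_u\}$ which makes the angle computation clean, but this drops out of the SVD setup because both pairs are eigenvectors of a self-adjoint operator with distinct eigenvalues.
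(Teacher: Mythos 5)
Your proof is correct and follows the same route as the paper's: the paper takes $S(A)$ and $U(A)$ to be the eigenspaces of the Hermitian matrix $A^*A$ for the eigenvalues $\|A\|^{-2}$ and $\|A\|^{2}$ and leaves the verification of \eqref{eq:sing.mnflds.scaling}, \eqref{eq:sing.mnflds.mult}, and \eqref{e.sing.mnfld.pert} as an exercise in linear algebra. Your argument supplies precisely the expected details — the SVD identification of singular values, the $\|A^{-1}\|=\|A\|$ trick for the involution relations, the Pythagorean computation giving $\sin^2\theta=(R^2-\|A\|^{-2})/(\|A\|^2-\|A\|^{-2})$ combined with $\tfrac{2}{\pi}\theta\le\sin\theta\le\theta$, and the spectral-gap argument for continuity.
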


\begin{proof}
Take $S$ to be the eigenspace of the Hermitian matrix $A^* A$ corresponding to the eigenvalue $\|A\|^{-2}$ and $U$ the eigenspace of $A^* A$ corresponding to the eigenvalue $ \|A\|^2$.  Verification of \eqref{eq:sing.mnflds.scaling}, \eqref{eq:sing.mnflds.mult}, and \eqref{e.sing.mnfld.pert} is then a pleasant exercise in linear algebra.
\end{proof}

\begin{proof}[Proof of Theorem \ref{t.uh.splitting}]

(a)$\implies $(b) We will frequently need to compare asymptotic behavior of sequences of nonnegative functions $f_n,g_n: \Omega \to \R_{\geq 0}$.  To that end, we will say $f_n \lesssim g_n$ if there exists a uniform constant $C_0>0$ so that $ f_n(\omega) \leq C_0 g_n(\omega) $ for every $\omega \in \Omega$ and every $n \in \Z_+$.  Similarly, if $x$ and $y$ are sequences, we write $ x_n \lesssim y_n $ if there exists $C_1 > 0$ with $ x_n \leq C_1 y_n $ for all $n \in \Z_+$.  It is quite easy for one to fill in the implicit constants, but they tend to clutter the exposition, so we omit them in the course of the proof.

The overall strategy is quite simple: the estimate \eqref{e.exp.growth} implies that the most contracted subspace of $A^n(\omega)$ converges exponentially fast to a well-defined limit as $n \to \infty$, which is a natural candidate for $\Lambda^s(\omega)$.  Proving the exponential decay estimates turns out to be slightly tricky -- the main idea is to show first that $\Lambda^s$ and $\Lambda^u$ are uniformly separated, and then use this to show that vectors in $\Lambda^s$ grow exponentially quickly in reverse time and then combine this with \eqref{eq:cocyc:rev} to show that vectors in $\Lambda^s$ decay exponentially in forward time.  The details follow.

Fix $C>0$ and $\lambda > 1$ so that \eqref{e.exp.growth} holds for all $n \in \Z$ and all $\omega \in \Omega$, and put
$$
B = \max_{\omega \in \Omega} \|A(\omega)\|,
$$
which is finite by continuity of $A$ and compactness of $\Omega$. If $n$ is sufficiently large, $\|A^n(\omega)\| > 1$; for such $n$, put $\Lambda_n^s(\omega) = S(A^n(\omega))$, choose unit vectors $v_n(\omega) \in \Lambda_n^s(\omega)$, and observe that
$$
\|A^n(\omega) v_{n+1}(\omega) \|
=
\|A(T^n \omega)^{-1} A^{n+1}(\omega) v_{n+1}(\omega) \|
\leq
B \|A^{n+1}(\omega)\|^{-1}.
$$
In particular, for $n > \log \left( 2C^{-1} \right) / \log \lambda $, we have $ \left\| A^n(\omega) \right\| > 2$, so using \eqref{e.sing.mnfld.pert} gives us
\begin{align*}
d \left( \Lambda_n^s(\omega), \Lambda_{n+1}^s(\omega) \right)
& \leq
\frac{\pi}{2} \sqrt{\frac{B^2 \|A^{n+1}(\omega) \|^{-2} - \|A^n(\omega)\|^{-2}}{\|A^n(\omega)\|^2 - \|A^n(\omega)\|^{-2}}}\\
& \leq
\pi B \|A^n(\omega) \|^{-1} \| A^{n+1}(\omega) \|^{-1} \\
& \leq
\pi B^2 \|A^n(\omega) \|^{-2}.
\end{align*}
We have used the easy inequality $\left(x^2 - x^{-2} \right)^{-1} \leq 4 x^{-2}$ for $x \geq 2$ in the second line.  Thus, we have shown
\begin{equation} \label{e.sing.mnflds.cauchy}
d \left( \Lambda_n^s(\omega), \Lambda_{n+1}^s(\omega) \right)
\lesssim
\lambda^{-2n},
\end{equation}
which implies that $(\Lambda_n^s(\omega))_{n=1}^{\infty} $ is a Cauchy sequence in $\C\PP^1$ for each $\omega$, so $ \Lambda^s(\omega) = \lim_{n \to \infty} \Lambda_n^s(\omega) $ exists for each $\omega$.  Of course, \eqref{e.sing.mnflds.cauchy} implies
\begin{align*}
d \left(\Lambda_n^s(\omega)), \Lambda^s(\omega) \right)
& \leq
\sum_{m=n}^{\infty} d \left(\Lambda_m^s(\omega),\Lambda_{m+1}^s(\omega) \right) \\
& \lesssim
\sum_{m = n}^{\infty} \lambda^{-2m}  \\
& \lesssim
\lambda^{-2n},
\end{align*}
so the sequence $\Lambda_n^s(\omega)$ converges uniformly on $\Omega$ -- in particular, $\Lambda^s(\cdot)$ is continuous, since \eqref{e.sing.mnfld.pert} implies that the map $ \omega \mapsto \Lambda_n^s(\omega)$ is continuous for each $n > \log\left(C^{-1}\right)/\log\lambda $.

Next, we prove invariance.  Notice that
$$
\|A^n(T\omega) A(\omega) v_{n+1}(\omega) \|
=
\| A^{n+1}(\omega) v_{n+1}(\omega) \|
=
\| A^{n+1}(\omega) \|^{-1}
\lesssim
\lambda^{-n},
$$
so another application of Lemma \ref{l.sing.mnflds} yields
$$
d \left( A(\omega) \cdot \Lambda_{n+1}^s(\omega), \Lambda_n^s(T\omega) \right)
\lesssim
\lambda^{-2n}.
$$
Passing to the limit, we obtain
$$
A(\omega) \cdot \Lambda^s(\omega) = \Lambda^s(T \omega),
$$
as desired.  Using similar arguments on the left half-line, we may construct
$$
\Lambda^u(\omega) = \lim_{n \to -\infty} \Lambda^s_n(\omega),
$$
which satisfies $A(\omega) \cdot \Lambda^u(\omega) = \Lambda^u(T\omega)$ and
\begin{equation} \label{e.unstable.mnfld.convergence}
d\left(\Lambda^s_{-n}(\omega), \Lambda^u(\omega) \right)
\lesssim
\lambda^{-2n}.
\end{equation}
Throughout the remainder of the proof, $ v_s(\omega) $ and $v_u(\omega)$ will denote unit vectors in $\Lambda^s(\omega)$ and $\Lambda^u(\omega)$, respectively.

The next step is to prove $ \Lambda^s(\omega) \neq \Lambda^u(\omega) $ for every $\omega$.  We will accomplish this by proving that there exist $n, k \in \Z$ such that $A^n(T^k \omega)$ shrinks vectors in $\Lambda^s(T^k \omega)$ and expands vectors in $\Lambda^u(T^k \omega)$.  To do this, we need to find a growth rate for cocycle iterates which is sufficiently close to optimal along a given orbit.  To that end, fix $\omega_0 \in \Omega $, and consider the set $M = M(\omega_0)$ defined by
$$
M
=
 \left\{
r > 1 : \text{for all but finitely many } n \in \Z, \,  \|A^n(T^k\omega_0)\| \geq C r^{|n|} \text{ for all } k \in \Z
\right\},
$$
and put $R = R(\omega_0) = (\sup M)^{3/4}$. Note that the $C$ in the definition of $M$ is the $C$ from \eqref{e.exp.growth} so that the set on the right hand side is nonempty.  Since $R < \sup M$, there exists $C'>0$ such that
$$
\| A^n(T^k \omega_0) \|
\geq
C ' R^{|n|}
$$
for all $n,k \in \Z$ (in particular, $C'$ depends on $\omega_0$, but \emph{not} on $k$). More explicitly, since $R \in M$, there is a finite set $F \subseteq \Z$ such that
$$
\| A^n(T^k \omega_0) \| \geq CR^{|n|}
$$
for all $n \in \Z \setminus F$ and all $k \in \Z$. One can then take
$$
C'
=
\min
\left(C , R^{-N}
\right),
$$
where $N = \sup_{n \in F} |n|$. However, because $R^{3/2} > \sup M$, we have
$$
\left\| A^{n_j} \left( T^{k_j}\omega_0 \right) \right\|
\leq
C R^{3 |n_j| / 2}
$$
for sequences $ n_j,k_j \in \Z $ such that $ |n_j| \to \infty $. Without loss of generality, assume $n_j \to \infty $.  By running the argument that we used to construct $\Lambda^s$, we see that
$$
\phi_j
:=
d \left( \Lambda^s \left( T^{k_j}\omega_0 \right),\Lambda_{n_j}^s \left( T^{k_j}\omega_0 \right) \right)
\lesssim
R^{-2n_j},
$$
where we have used that $C'$ is $k$-independent to obtain a $j$-independent implicit constant on the right hand side.  As a consequence,
\begin{align*}
\left\| A^{n_j} \left( T^{k_j}\omega_0 \right) v_s \left( T^{k_j} \omega_0 \right) \right\|^2
& =
\left\| A^{n_j} \left( T^{k_j} \omega_0 \right) \right\|^{2} \sin^2(\phi_j) + \left\| A^{n_j} \left( T^{k_j}\omega_0 \right) \right\|^{-2}\cos^2(\phi_j) \\
& \lesssim
R^{3n_j} (R^{-2n_j})^2 + R^{-2 n_j},
\end{align*}
where we have used the orthogonality statement from Lemma \ref{l.sing.mnflds} to obtain the first line. In particular, this bound implies that $\| A^{n_j}(T^{k_j} \omega_0) v_s(T^{k_j}\omega_0) \| < 1$ for sufficiently large $j$.  Similarly, one can show that
\begin{equation} \label{eq:rev:decay}
\left\| A^{-n_j} \left( T^{k_j+n_j}\omega_0 \right) v_u \left( T^{k_j+n_j}\omega_0 \right) \right\| < 1
\end{equation}
for large $j$.  More precisely, one can use \eqref{eq:cocyc:rev} to see that
$$
\left\| A^{-n_j}\left(T^{k_j+n_j}\omega_0 \right) \right\|
=
\left\| A^{n_j}(T^{k_j}\omega_0)^{-1} \right\|
=
\left\| A^{n_j}(T^{k_j}\omega_0) \right\|,
 $$
where we have used $|\det|=1$ in the second equality.  With this observation, one can run an argument almost identical to the one above to prove \eqref{eq:rev:decay} for sufficiently large $j$.  It follows that
$$
\left\| A^{n_j} (T^{k_j}\omega_0) v_u \left( T^{k_j}\omega_0 \right) \right\| > 1
$$
for such $j$.  Thus, $ \Lambda^s(T^{k_j}\omega_0) \neq \Lambda^u(T^{k_j}\omega_0) $ for some large $j$.  By invariance, $ \Lambda^s(\omega_0) \neq \Lambda^u(\omega_0) $. Consequently, by compactness of $\Omega$ and continuity of $\Lambda^s$ and $\Lambda^u$, there exists $ \gamma > 0 $ so that $ d \left( \Lambda^s(\omega), \Lambda^u(\omega) \right) \geq \gamma $ for all $\omega \in \Omega $.

To see that we have the desired exponential decay estimates, put
$$
\theta_n
=
\theta_n(\omega)
=
d \left(\Lambda^s \left(T^n\omega \right), \Lambda_{-n}^s(T^n \omega) \right).
$$
Now, recall that $ d(\Lambda_{-n}^s(T^n \omega), \Lambda^u(T^n\omega)) \lesssim \lambda^{-2n} $ by \eqref{e.unstable.mnfld.convergence}, so
$$
\theta_n(\omega) \geq \gamma - C_1\lambda^{-2n}
$$
for some constant $C_1 > 0$, all $n \in \Z_+ $, and every $\omega \in \Omega$. Thus,
\begin{align*}
\| A^{-n}(T^n\omega) v_s(T^n \omega) \|^2
& =
\| A^{-n}(T^n\omega) \|^2 \sin^2(\theta_n) + \| A^{-n}(T^n\omega) \|^{-2} \cos^2(\theta_n) \\
& \gtrsim
\lambda^{2n} \sin^2(\gamma/2) \\
& \gtrsim
\lambda^{2n}.
\end{align*}
Using invariance and applying the cocycle identity \eqref{eq:cocyc:rev} to the previous bound, we obtain
$$
\| A^{n}(\omega) v_s(\omega) \| \lesssim  \lambda^{-n},
$$
as desired.  One proves the decay estimates for $v_u(\omega)$ in a similar fashion.

(b) $\implies$ (c) Suppose $(T,A)$ admits an invariant exponential splitting $\Lambda^s,\Lambda^u$ with associated constants $C>0,\lambda>1$.  Given $ \omega \in \Omega $ and $v \in \S^3$, one can write $ v = c_s v_s(\omega) + c_u v_u(\omega) $.  If $c_s \neq 0 $ (respectively $c_u \neq 0$), then $\|A^n(\omega)v\|$ grows exponentially quickly on the left half-line (respectively the right half-line).  In either case, it is clear that $v$ is not a Sacker-Sell solution.

(c) $\implies $ (a)  Consider the open sets
$$
O_{n,\delta}
=
\left\{ (\omega,v) \in \Omega \times \S^3 : \| A^n(\omega) v \| > 1+\delta \right\}.
$$
By (c), the family $ \{O_{n,\delta} : n \in \Z, \delta>0\} $ constitutes an open cover of $\Omega \times \S^3$, so, by using compactness to pass to a finite subcover, we see that we can choose $N \in \Z_+$ and $\varepsilon > 0$ such that for any $(\omega,v) \in \Omega \times \S^3$, there exists $n$ with $|n| \leq N$ and
\begin{equation} \label{e.matnorm.bounded.below}
\| A^n(\omega) v \| > 1 + \varepsilon.
\end{equation}
To prove exponential bounds of the form \eqref{e.exp.growth}, it suffices to prove the following two claims.

\vspace*{4pt}\noindent \textbf{Claim 1.}  \emph{For every $ \omega \in \Omega $ and every $ v \in \S^3 $, there exists a sequence $ n_k \in \Z $ with the following properties:}
\begin{enumerate}
\item \emph{$n_k$ is strictly \emph{monotone}.}
\item \emph{$ n_0 = 0 $.}
\item \emph{$ |n_{k} - n_{k-1}| \leq N $ for all $ k \in \Z_+ $.}
\item \emph{$ \| A^{n_k}(\omega) v \| > (1 + \varepsilon)^k $ for every $ k \in \Z_+ $.}
\end{enumerate}

\vspace*{4pt}\noindent \textbf{Claim 2.}  \emph{For every $ \omega \in \Omega $, there exists $ v_+ = v_+(\omega) \in \S^3 $ and a strictly \emph{increasing} sequence $n_k \in \Z $ which satisfies items (2)--(4) of Claim 1.}

\vspace*{4pt}
First, let us see how Claim 2 implies the desired result.  Given $ \omega \in \Omega $, let $ v_+ \in \S^3 $ be as in Claim 2.  Obviously, items (3) and (4) imply that
\[
\|A^{n_k}(\omega) \|
>
(1+\varepsilon)^k
\geq
(1 + \varepsilon)^{n_k/N}
\]
Recall $B = \max\|A(\omega)\| $.  To obtain exponential lower bounds for every iterate on the right half-line, one simply interpolates and uses uniform boundedness of $|n_k - n_{k-1}|$.  More precisely, fix $k \in \Z_+$ and $ 1 \leq r \leq N $.  We have
\begin{align*}
\| A^{n_k+r}(\omega) \|
& \geq
\|A^{n_k}(\omega) \| B^{-r} \\
& \geq
(1+\varepsilon)^k B^{-r} \\
& \geq
\frac{(1+\varepsilon)^{k+r/N}}{B^N(1+\varepsilon)} \\
& \geq
\frac{(1+\varepsilon)^{\frac{1}{N}(n_k+r)}}{B^N(1+\varepsilon)}.
\end{align*}
In particular, with  $ \lambda = (1+\varepsilon)^{1/N} $ and $ C = B^{-N} (1+\varepsilon)^{-1} $, we have an exponential lower bound of the form \eqref{e.exp.growth} for all $ \omega \in \Omega $ and $ n \geq 0 $.  To extend this to negative $n$, simply apply the cocycle identity \eqref{eq:cocyc:rev} to deduce
\[
\left\| A^{-n}(\omega) \right\|
=
\left\| A^n(T^{-n} \omega)^{-1} \right\|
=
\left\| A^n(T^{-n}\omega) \right\|
\geq
C\lambda^n
\]
for every $ n \in \Z_+ $ (note that the second equality uses $|\det|=1$).  Thus, we need only prove the claims.

\begin{proof}[Proof of Claim 1]

Let $\omega\in\Omega, v \in \S^3$ be given, and apply \eqref{e.matnorm.bounded.below} inductively  to choose integers $n_1,n_2,\ldots $ so that $ |n_k - n_{k-1}| \leq N $ and
$$
 \| A^{n_k}(\omega) v \| \geq (1+\varepsilon)^k
$$
for all $k$.  Since  $(1+\varepsilon)^k \to \infty$ as $k \to \infty$, we must have either $ n_k \to + \infty $ or $n_k \to - \infty$ as $k \to \infty$.  Without loss of generality, assume that $ n_k \to +\infty $ (the argument in the case $n_k \to -\infty $ is nearly identical).  If $0 < k_1 < k_2 < \cdots$ are positive integers, we make the trivial observation that
\[
\| A^{n_{k_j}}(\omega) v \|
\geq
(1+\varepsilon)^{k_j}
\geq
(1+\varepsilon)^j.
\]
As a consequence, we may pass to a subsequence to produce $n_k$ which are strictly monotone and still satisfy items (2)--(4) of the claim.
\end{proof}

\begin{proof}[Proof of Claim 2]
Fix $ \omega \in \Omega $  and suppose for the sake of establishing a contradiction that there does \emph{not} exist a $v_+ \in \S^3 $ and an increasing sequence $ n_k \in \Z_+ $ which satisfies items (2)--(4). Then, for every $v \in \S^3$, Claim 1 implies that $v$ enjoys a strictly \emph{decreasing} sequence $ n_k \in \Z $ with $n_0 = 0$, $ |n_k - n_{k-1}| \leq N $, and $ \| A^{n_k}(\omega)v\| > (1+ \varepsilon)^k $ for all $k$.  Interpolating as before, we obtain estimates of the form
\begin{equation} \label{eq:uh:char:impossible:lb}
\| A^{-n}(\omega) v \|
>
\tilde C (1 + \varepsilon)^{n/N}
\end{equation}
for some $ \tilde C>0$ which is \emph{uniform} in $v \in \S^3 $ and $n \in \Z_+$.  However, this is nonsense: for sufficiently large $n$, \eqref{eq:uh:char:impossible:lb} implies that $ \|A^{-n}(\omega) v\| >1 $ for every $ v \in \S^3 $, which contradicts unimodularity of $\det\left(A^{-n}(\omega)\right)$.  This proves the claim.
\end{proof}

This completes the proof of Theorem \ref{t.uh.splitting}.
\end{proof}

Item (c) in the characterization above is pleasant for at least two reasons.  First, it allows us to relate (the absence of) hyperbolicity of cocycles to existence of bounded orbits of the cocycle action, which provides a link between hyperbolicity and the theory of generalized eigenfunctions.  Second, it makes robustness of uniform hyperbolicity under perturbations very easy to see.

\begin{corollary} \label{c:uh:open}
Uniform hyperbolicity is a robust property in the sense that $ \UH $ defines an open subset of $ \mathrm{Homeo}(\Omega) \times C(\Omega, G) $, where both factors are endowed with the appropriate uniform topology.
\end{corollary}

\begin{proof}
Given a uniformly hyperbolic cocycle $ (T,A) $, use the compactness argument from the proof of (c) $\implies$ (a) to produce $ \varepsilon > 0$, $N \in \Z_+ $ so that for any $ \omega \in \Omega ,v \in \X^3 $, there is $|n| \leq N $ so that $ \|A^n(\omega)v\| > 1+\varepsilon $.  It is then easy to see that if $T'$ and $ A' $ are sufficiently small uniform perturbations of $T$ and $A$, then $ \left( T', A' \right) $ will also enjoy this property.
\end{proof}

\section{Generalized eigenfunctions of CMV matrices} \label{s:genef}

Suppose $(\alpha_n)_{n \in \Z} \in \D^{\Z}$ and the associated extended CMV matrix $\mathcal{E}$ are given. For simplicity of notation, we introduce
\begin{align*}
a_n & = -\overline{\alpha_n} \alpha_{n-1}\\
b_n & = \overline{\alpha_n} \rho_{n-1}\\
c_n & = -\rho_n \alpha_{n-1}\\
d_n & = \rho_n \rho_{n-1}
\end{align*}
for each $n \in \Z$. In terms of these parameters, the matrix representation of $\E$ becomes
$$
\E
=
\begin{pmatrix}
\ddots & \ddots & \ddots &&&&&  \\
 & a_0 & b_1 & d_1 &&& & \\
& c_0 & a_1 & c_1 &&& & \\
&  & b_2 & a_2 & b_3 & d_3 & & \\
& & d_2 & c_2 & a_3 & c_3 &  &  \\
& &&& b_4 & a_4 & b_5 & \\
& &&& d_4 & c_4 & a_5 &   \\
& &&&& \ddots & \ddots &  \ddots
\end{pmatrix}.
$$

\begin{definition} \label{d:genef} \normalfont
 We say that $\phi$ is a \emph{generalized eigenvector} of $\E$ with corresponding \emph{generalized eigenvalue} $z$ if $\phi:\Z \to \C$ is a nonzero sequence which satisfies
$
\E  \phi = z \phi
$
and is polynomially bounded.  That is, there exist $R,S > 0$ with
$$
|\phi_n|
\leq
R \big( 1 + |n| \big)^S
$$
for all $n \in \Z$.  Notice that $\phi$ is not necessarily an element of $\ell^2(\Z)$.
\end{definition}

The following lemma is standard and easily proved by an inductive argument.  This is done in \cite[Lemma 3]{MO}, for example.

\begin{lemma} \label{l:cmv.spectral.basis}
The set $\{\delta_0,\delta_1\}$ is a spectral basis for $\E$.  That is to say, the set of finite linear combinations of vectors of the form $\E^k \delta_0$ and $\E^k \delta_1$ with $k \geq 0$ is dense in $\ell^2(\Z)$.
\end{lemma}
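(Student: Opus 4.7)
The plan is to show that the closed cyclic subspace
\[
V := \overline{\mathrm{span}\{\E^k\delta_0,\E^k\delta_1 : k\geq 0\}}
\]
contains every standard basis vector $\delta_n$; since $\{\delta_n\}_{n\in\Z}$ is a total set in $\ell^2(\Z)$ and $V$ is closed, this will give $V=\ell^2(\Z)$ and hence the density statement. I would argue by induction on $|n|$, with the base case $\delta_0,\delta_1\in V$ being immediate.

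For the inductive step, suppose $\delta_m\in V$ for every $m$ in a window $[-M,M]$ around the origin. Applying $\E$ to the outermost known basis vector $\delta_j$ and subtracting the already-known summands produces a relation involving the next basis vector outside the window, with its coefficient one of $\{a_n,b_n,c_n,d_n\}$. Because $|\alpha_n|<1$ forces $\rho_n>0$, the coefficient $d_n=\rho_n\rho_{n-1}$ is always strictly positive; so by choosing the pivot $j$ of the appropriate parity one can arrange for the newly extracted basis vector to appear with the non-vanishing coefficient $d_j$, which yields $\delta_{M+1}\in V$. A symmetric argument at the opposite end of the window gives $\delta_{-M-1}\in V$, closing the induction in the generic case.

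The main obstacle is the parity configuration in which applying $\E$ to the outermost $\delta_j$ produces not a clean new basis vector but a tangled combination $b_{M+1}\delta_{M+1}+d_{M+1}\delta_{M+2}\in V$ of two new unknowns at once. In that case I would apply $\E$ once more to this vector; the resulting relation, combined with the original one, yields a $2\times 2$ linear system in $\delta_{M+1},\delta_{M+2}$ whose determinant can be shown to be nonzero thanks to the algebraic identity
\[
b_n c_n - d_n a_n = 0,
\]
which one checks directly from $a_n=-\overline{\alpha_n}\alpha_{n-1}$, $b_n=\overline{\alpha_n}\rho_{n-1}$, $c_n=-\rho_n\alpha_{n-1}$, $d_n=\rho_n\rho_{n-1}$ together with $|\alpha_n|^2+\rho_n^2=1$. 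After the cancellation forced by this identity, the remaining coefficients reduce to products of the never-vanishing $d_n$'s (and factors like $\rho_n$ and $\overline{\alpha_{n+1}}/\overline{\alpha_n}$ when the relevant $\alpha$'s are nonzero), so the system is solvable for $\delta_{M+1}$ and $\delta_{M+2}$ individually. The induction then advances by up to two indices per step, and once $\delta_n\in V$ is established for all $n\in\Z$, closedness of $V$ gives $V=\ell^2(\Z)$.
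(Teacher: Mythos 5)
Your proof has a genuine gap, and there is a preliminary point worth flagging. The lemma as literally stated (with $k \geq 0$ only) is in fact false: if $\alpha_n \equiv 0$, then $\rho_n \equiv 1$ and $\E\delta_{2n} = \delta_{2n-2}$, $\E\delta_{2n+1} = \delta_{2n+3}$, so $\E^k\delta_0 = \delta_{-2k}$ and $\E^k\delta_1 = \delta_{2k+1}$ for all $k \geq 0$, and $\delta_2$ is orthogonal to the closed span of all of these. The statement one actually needs (and the one the proof of Theorem~\ref{t:schnol:cmv} uses) is that $\{\delta_0,\delta_1\}$ generates $\ell^2(\Z)$ under both $\E$ and $\E^* = \E^{-1}$, i.e.\ with $k \in \Z$. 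Your argument never invokes $\E^*$, so it cannot succeed no matter how the details are arranged.

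Even granting the use of $\E$ alone, the ``$2\times 2$ system'' step is incorrect. From $b_2\delta_2 + d_2\delta_3 \in V$, applying $\E$ gives $b_2\E\delta_2 + d_2\E\delta_3$; since $\E\delta_3$ already involves $\delta_4$ and $\delta_5$ (with coefficients $b_4$ and $d_4$), after subtracting the known $\delta_0,\delta_1$ pieces you obtain one new relation among the \emph{four} unknowns $\delta_2,\delta_3,\delta_4,\delta_5$, not a second equation in $\delta_2,\delta_3$, and iterating $\E$ only makes the count worse. Your appeal to $b_nc_n - d_na_n = 0$ is also backwards: that identity says the block $\left(\begin{smallmatrix} a_n & b_n \\ c_n & d_n \end{smallmatrix}\right)$ is \emph{singular}, which would make the would-be $2\times2$ system degenerate rather than solvable. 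The repair is to alternate $\E$ and $\E^*$ to exploit the parity of the rows and columns of $\E$. Given $\delta_m \in V$ for $-2M \le m \le 2M+1$, the vector $\E^*\delta_{2M}$ is supported on $\{2M-1,\dots,2M+2\}$ with $\delta_{2M+2}$-coefficient $\overline{d_{2M+1}} \neq 0$, so $\delta_{2M+2}\in V$ at once; then $\E\delta_{2M+1}$ gives $\delta_{2M+3}$ via the nonvanishing coefficient $d_{2M+2}$. Symmetrically, $\E^*\delta_{-2M+1}$ gives $\delta_{-2M-1}$ (coefficient $\overline{d_{-2M}}$), and then $\E\delta_{-2M}$ gives $\delta_{-2M-2}$ (coefficient $d_{-2M-1}$). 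This closes the induction cleanly and shows why the parity deadlock you encountered is unavoidable without $\E^*$.
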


\begin{definition} \normalfont
Let $\E$ be an extended CMV matrix.  For $\psi \in \ell^{2}(\Z)$, define the spectral measure $\mu_\psi^{\E}$ by
\[
\langle \psi, g(\E) \psi \rangle
=
\int_{\partial \D} \! g(z) \, d\mu_\psi^{\E}(z)
\]
for all continuous functions $g:\partial \D \to \C$.
  By Lemma \ref{l:cmv.spectral.basis}, $\mu := \mu_{\delta_0}^{\E} + \mu_{\delta_1}^{\E}$ is a universal spectral measure for $\E$ in the sense that all other spectral measures of $\E$ are absolutely continuous with respect to $\mu$.  We call $\mu$ the \emph{canonical spectral measure} of $\E$.
\end{definition}

\begin{theorem}[Schnol's Theorem for Extended CMV Matrices] \label{t:schnol:cmv}
Let $\E$ be an extended CMV matrix with Verblunsky coefficients $(\alpha_n)_{n \in \Z}$, and let $\mathcal G$ denote the set of generalized eigenvalues of $\E$. Then,
\begin{itemize}
\item[\rm (a)] $\mathcal G \subseteq \sigma(\E)$,
\item[\rm (b)] $\mu(\sigma(\E) \setminus \mathcal G) = 0$,
\item[\rm (c)] $\overline{\mathcal G} = \sigma(\E)$.
\end{itemize}
\end{theorem}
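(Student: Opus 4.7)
The proof splits into (a), (b), and (c), with (c) being an immediate formal consequence of the other two.

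For (a), my plan is to build a Weyl sequence. Given $\E\phi = z\phi$ with $|\phi_n| \leq R(1+|n|)^S$, the case $\phi \in \ell^2(\Z)$ trivially yields $z \in \sigma(\E)$, so I assume $\phi \notin \ell^2(\Z)$. Set $\phi^{(N)} = \chi_{[-N,N]}\phi \in \ell^2(\Z)$ and $a_N = \|\phi^{(N)}\|^2$. The banded structure of $\E$ yields that $(\E - z)\phi^{(N)}$ is supported in a bounded collar of $\{\pm N\}$, giving $\|(\E - z)\phi^{(N)}\|^2 \leq M(a_N - a_{N-C})$ for some fixed constants $C, M$. If the ratio $\|(\E-z)\phi^{(N)}\|^2/a_N$ were uniformly bounded below by $\varepsilon^2 > 0$, combining the two inequalities would force $a_N \geq (1-\varepsilon^2/M)^{-1} a_{N-C}$, yielding exponential growth of $a_N$ in $N$---contradicting the polynomial bound $a_N \leq R^2(2N+1)(1+N)^{2S}$. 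Hence some subsequence furnishes a Weyl sequence, proving $z \in \sigma(\E)$.

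For (b), I would invoke the direct integral decomposition afforded by the spectral theorem. By Lemma \ref{l:cmv.spectral.basis}, $\{\delta_0, \delta_1\}$ is a spectral basis, so there is a unitary equivalence $\ell^2(\Z) \cong \int^\oplus \C^{d(z)}\, d\mu(z)$ with $d(z) \in \{1,2\}$, under which $\E$ acts as multiplication by $z$. Each $\delta_n$ corresponds to a measurable section $f_n(z) \in \C^{d(z)}$, and the matrix identity $\E\delta_n = \sum_m \E_{mn}\delta_m$ translates, off a single $\mu$-null set, to the pointwise recursion $zf_n(z) = \sum_m \E_{mn}f_m(z)$. Choosing $S > 1/2$ and setting $w(n) = (1+|n|)^{-S}$, Tonelli's theorem gives
\[
\int \sum_n w(n)^2 \|f_n(z)\|^2 \, d\mu(z) = \sum_n w(n)^2 \|\delta_n\|^2 = \sum_n w(n)^2 < \infty,
\]
so for $\mu$-a.e. $z$ the sum is finite; in particular $\|f_n(z)\| = O((1+|n|)^S)$. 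Since $\{\delta_0, \delta_1\}$ is a spectral basis, the span of $\{f_0(z), f_1(z)\}$ equals $\C^{d(z)}$ for $\mu$-a.e. $z$, so by projecting onto any nonzero coordinate one extracts a nontrivial polynomially bounded scalar solution to the CMV eigenfunction equation, witnessing $z \in \mathcal{G}$.

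Item (c) is then immediate: (a) gives $\overline{\mathcal G} \subseteq \sigma(\E)$, while (b) shows that $\mathcal G$ has full $\mu$-measure in $\sigma(\E) = \supp(\mu)$, so every relatively open subset of $\sigma(\E)$ meets $\mathcal G$, giving $\sigma(\E) \subseteq \overline{\mathcal G}$. The main obstacle will be part (b): carefully justifying that the sections $f_n(z)$ satisfy the recursion pointwise off a \emph{single} $\mu$-null set (rather than one null set per $n$, whose union could a priori be huge), and that the scalar solution produced by projection is genuinely nonzero. Part (a) is a standard Weyl sequence argument, and (c) is measure-theoretic bookkeeping once (b) is in hand.
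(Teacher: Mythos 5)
Parts (a) and (c) of your proposal are in essence the paper's own arguments. For (a), note that the boundary values of $(\E - z)\phi^{(N)}$ near $\pm N$ involve entries of $\phi$ both just inside \emph{and} just outside the cutoff window: for $|k| \le N$ near the edge one has $[(\E-z)\phi^{(N)}]_k = -\sum_{|m|>N}\E_{km}\phi_m$, so the correct bound is $\|(\E - z)\phi^{(N)}\|^2 \le M(a_{N+C'} - a_{N-C})$ rather than $M(a_N - a_{N-C})$; this changes nothing in the exponential-growth contradiction. Also, the concern you flag at the end about accumulating null sets in (b) is not a real obstacle: there are countably many indices $n$, so the exceptional sets for the individual recursion relations combine into a single $\mu$-null set automatically.

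For (b) you take a direct-integral route, while the paper works with Radon--Nikodym derivatives $\Phi_{n,m} = d\mu_{n,m}/d\rho$ relative to the weighted trace measure $\rho = \sum_n\lambda_n\mu_{n,n}$ with $\lambda_n \asymp (1+|n|)^{-2}$; the two frameworks are parallel (your $w(n)^2$ plays the role of $\lambda_n$, and your Tonelli computation corresponds to the paper's bound $|\Phi_{n,m}| \le \lambda_n^{-1/2}\lambda_m^{-1/2}$). However, there is a genuine bug in your version. The recursion you derive, $zf_n(z) = \sum_m \E_{mn}f_m(z)$, says that each coordinate of $(f_n(z))_n$ is a generalized eigenvector of $\E^{\top}$, \emph{not} of $\E$: a generalized eigenvector must satisfy $\sum_m\E_{nm}\phi_m = z\phi_n$, and your indices are transposed. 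The paper avoids this by pairing against $\E^*\delta_n$ (rather than expanding $\E\delta_n$), which builds in the necessary conjugation and lands directly on $(\E\varphi^z)_n = z\varphi^z_n$. Your argument is repairable: since $\E$ is unitary and $|z|=1$, the relation $\E^{\top}g = zg$ yields $\E^*\overline{g} = \overline{z}\,\overline{g}$ and hence $\E\,\overline{g} = z\,\overline{g}$, so the sequence to output is $\varphi_n(z) = \overline{\langle v, f_n(z)\rangle}$ for a suitable nonzero $v\in\C^{d(z)}$, not the un-conjugated coordinate.
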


\begin{proof}
(a) Suppose $z \in \mathcal G$ and let $\phi$ be a corresponding generalized eigenfunction.  As in the Schr\"odinger case, the main idea is that normalized cutoffs of $\phi$ will produce a Weyl sequence because the ``integral'' dominates the boundary terms for polynomially bounded sequence.  More precisely, for each $N \in \Z_+$, define $\phi^{(N)}, \psi^{(N)} \in \ell^2(\Z)$ by
$$
\phi^{(N)}_k
=
\begin{cases}
\phi(k) & -2N + 1 \leq k \leq 2N \\
0 & \text{otherwise}
\end{cases}
\quad
\text{and}
\quad
\psi^{(N)}
=
\frac{\phi^{(N)}}{ \left\| \phi^{(N)} \right\|}.
$$
Since $\phi$ and the zero sequence are both in the kernel of $(\E - z)$, we have\break $ \left( (\E - z)\phi^{(N)} \right)_k = 0 $ unless $k = -2N,-2N+1,2N,2N+1$. Calculations at those points yield
\begin{align}
\label{eq:genef.cutoff0}
\left[ (\E - z) \phi^{(N)} \right]_{-2N}
& =
b_{-2N+1} \phi_{-2N+1} + d_{-2N+1} \phi_{-2N+2}
\\
\label{eq:genef.cutoff1}
\left[ (\E - z) \phi^{(N)} \right]_{-2N+1}
& =
-d_{-2N} \phi_{-2N-1} - c_{-2N} \phi_{-2N}
\\
\label{eq:genef.cutoff2}
 \left[ (\E - z) \phi^{(N)} \right]_{2N}
& =
-b_{2N+1} \phi_{2N+1} - d_{2N+1} \phi_{2N+2}
\\
\label{eq:genef.cutoff3}
 \left[ (\E - z) \phi^{(N)} \right]_{2N+1}
& =
d_{2N}\phi_{2N-1} + c_{2N} \phi_{2N}
\end{align}
Note that \eqref{eq:genef.cutoff1} and \eqref{eq:genef.cutoff2} use the fact that $\phi$ satisfies $\E \phi = z \phi$. In particular, \eqref{eq:genef.cutoff0}--\eqref{eq:genef.cutoff3} yield
$$
\frac{1}{2} \left\| (\E - z) \phi^{(N)} \right\|^2
\leq
\sum_{j=-1}^2  | \phi_{-2N+j}|^2 +| \phi_{2N+j}|^2
=
 \left\| \phi^{(N+1)} \right\|^2 - \left\| \phi^{(N-1)} \right\|^2.
$$
Now, we claim that
\begin{equation} \label{eq:genef:weyl}
\liminf_{N \to \infty}
\left\| (\E - z) \psi^{(N)} \right\|
=
0.
\end{equation}
If \eqref{eq:genef:weyl} fails, then there exist $N_0 \in \Z_+$, $\delta > 0$ so that
$$
\left\| (\E - z) \psi^{(N)} \right\|^2
\geq
\delta
$$
for $N \geq N_0$.  Thus, for $N \geq N_0$,
$$
\left\| \phi^{(N+1)} \right\|^2
\geq
\left( 1 + \frac{\delta}{2} \right) \left\| \phi^{(N-1)} \right\|^2,
$$
which leads to an exponential lower bound on the growth of $ \left\| \phi^{(N)} \right\|^2 $, contradicting polynomial boundedness of the same. Thus, \eqref{eq:genef:weyl} holds, and so $z \in \sigma(\E)$ follows readily from the Weyl criterion.

(b) For each $n,m \in \Z$, define a (complex) Borel measure $\mu_{n,m}$ on $\partial \D$ via
$$
\mu_{n,m}(B)
=
\langle \delta_n, \chi_B(\E) \delta_m \rangle
$$
Define
$$
c =
\left( \sum_{n \in \Z} ( 1 + |n| )^{-2} \right)^{-1}
,
\quad
\lambda_n
=
c(1+|n|)^{-2},
\quad
\rho
=
\sum_{n \in \Z} \lambda_n \mu_{n,n}.
$$
Evidently, $\rho$ is a probability measure and and $\mu \ll \rho \ll \mu$.  For each $n,m \in \Z$, let $ \Phi_{n,m}$ denote the Radon-Nikodym derivative of $\mu_{n,m}$ with respect to $\rho$.  That is to say
$$
\int_{\partial \D} \! g(z) \, d\mu_{n,m}(z)
=
\int_{\partial \D} \! g(z) \Phi_{n,m}(z) \, d\rho(z)
$$
for all continuous functions $g$ on $\partial \D$.
For each $z \in \partial \D$, define a sequence $\varphi^z$ via $ \varphi^z_n = \Phi_{n,0}(z)$.  Notice first that
\begin{align*}
\rho(B)
& =
\sum_{n \in \Z} \lambda_n \mu_{n,n}(B) \\
& =
\int_{\partial \D} \! \chi_B(z) \left( \sum_{n \in \Z} \lambda_n \Phi_{n,n}(z) \right) \, d\rho(z)
\end{align*}
for each Borel set $B \subseteq \partial \D$.  Thus, $\Phi_{n,n}(z) \leq \lambda_n^{-1}$ for every $n \in \Z$ and $\rho$ almost every $z$.  Consequently, by Cauchy-Schwarz,
\begin{align*}
\left| \int \! \chi_B(z) \Phi_{n,m}(z) \, d\rho(z) \right|
& =
\left| \mu_{n,m}(B) \right| \\
& \leq
\mu_{n,n}(B)^{1/2} \mu_{m,m}(B)^{1/2} \\
& \leq
\lambda_n^{-1/2} \lambda_m^{-1/2} \rho(B).
\end{align*}
We then must have
$$
|\Phi_{n,m}(z)|
\leq
\lambda_n^{-1/2} \lambda_m^{-1/2}
$$
for all $n,m \in \Z$ and $\rho$ almost every $z \in \partial \D$.  In particular, $ |\varphi_n^z| \lesssim (1+|n|) $ for $\rho$ almost every $z$.  For even $n$ and any continuous $g$, we have
\begin{align*}
\int \! g(z) (z \varphi_n^z) \, d\rho(z)
& =
\int \! zg(z) \, d\mu_{n,0}(z) \\
& =
\langle \delta_n, \E g(\E) \delta_0 \rangle \\
& =
\langle \E^* \delta_n, g(\E) \delta_0 \rangle \\
& =
\langle b_n^* \delta_{n-1} + a_n^* \delta_n + b_{n+1}^* \delta_{n+1} + d_{n+1}\delta_{n+2} , g(\E) \delta_0 \rangle \\
& =
\int \! \big(  b_n \varphi_{n-1}^z + a_n \varphi_n^z + b_{n+1} \varphi_{n+1}^z + d_{n+1}\varphi_{n+2}^z \big) \, g(z) \, d\rho(z) \\
& =
\int \! (\E \varphi^z)_n \, g(z) \, d\rho(z).
\end{align*}
A similar calculation works for odd $n$.  Thus, $ \varphi^z $ is a generalized eigenfunction of $\E$ for $\rho$ almost every (hence $\mu$ almost every) $z \in \partial \D$.

(c) By (a) and (b), $\overline{\mathcal G}$ is a closed subset of $\sigma(\E)$ which supports $\mu$. Since every spectral measure is absolutely continuous with respect to $\mu$, $\mathcal G$ supports every spectral measure.  Since the spectrum of $\E$ is the smallest closed set which supports every spectral measure of $\E$, the conclusion in (c) follows.
\end{proof}

With these pieces in place, we can combine Theorems \ref{t.uh.splitting} and \ref{t:schnol:cmv} to describe the spectra of dynamically defined extended CMV matrices in terms of the region of energies for which the associated cocycles are uniformly hyperbolic.  More precisely, if $z$ is such that one (and hence both) of the associated cocycles is uniformly hyperbolic, then $z$ is not in the spectrum of any operator in the family.  If the family fibers over minimal dynamics, then the converse holds as well.

Since this theorem is most transparently proved by way of generalized eigenfunctions, we need a matrix cocycle associated to an extended CMV matrix which encodes the behavior of solutions to the difference equation $\E u = zu$.  To that end, consider the  matrices
\[
P(\alpha,z)
=
\frac{1}{\rho}
\begin{pmatrix}
-\alpha & z^{-1} \\
z & - \overline\alpha
\end{pmatrix}
,
\quad
Q(\alpha,z)
=
\frac{1}{\rho}
\begin{pmatrix}
-\overline\alpha & 1 \\
1 & - \alpha
\end{pmatrix}.
\]
These matrices were introduced in \cite{GZ06} by Gesztesy and Zinchenko, although we use the normalization in \cite{O14} -- that is, we replace $\alpha$ by $-\overline{\alpha}$ in Gesztesy-Zinchenko's definition of a CMV matrix, which is then reflected here.

Let us describe how the Gesztesy-Zinchenko matrices capture the recursion described by the difference equation $ \E u = zu $. First, recall that an extended CMV matrix $\E$ enjoys a factorization of the form $ \E = \mathcal{L} \mathcal{M}$, where $\mathcal L$ and $\mathcal M$ are direct sums of $2 \times 2$ matrices of the form
$$
\Theta(\alpha)
=
\begin{pmatrix}
\overline{\alpha} & \rho \\
\rho & - \alpha
\end{pmatrix}.
$$
More precisely, one may take
\[
\mathcal L
=
\bigoplus_{n \in \Z} \Theta(\alpha_{2n}),
\quad
\mathcal M
=
\bigoplus_{n \in \Z} \Theta(\alpha_{2n+1}),
\]
where $\Theta(\alpha_j)$ is understood to act on coordinates $j$ and $j+1$ in both direct sums; compare \cite[Proposition~4.2.4]{S05}. Here, we follow the conventions of \cite{S05} (\cite{GZ06} has $\Theta(\alpha_j)$ act on coordinate $j-1$ and $j$). Now, if $u$ is a complex sequence such that $ \E u = zu $, then, with $ v = \mathcal{M}u $, it is easy to check that $ \E^{\top} \! v = zv $. Moreover, from the explicit form of $\mathcal{L}$ and $\mathcal{M} $ it is apparent that $u$ is bounded (resp., polynomially bounded) if and only if $v$ is bounded (resp., polynomially bounded). Gesztesy and Zinchenko's arguments show that
$$
\begin{pmatrix}
u_{n+1} \\ v_{n+1}
\end{pmatrix}
=
Y(n,z)
\begin{pmatrix}
u_n \\ v_n
\end{pmatrix}
$$
for all $n$, where $Y(n,z) = P(\alpha_n,z)$ when $n$ is even and $Y(n,z) = Q(\alpha_n,z)$ when $n$ is odd.

It is helpful to note that the Gesztesy-Zinchenko and Szeg\H{o} matrices are closely related. Indeed, we have
$$
Q(\alpha,z) P(\beta,z)
=
\frac{1}{\rho_\alpha \rho_\beta}
\begin{pmatrix}
z+\overline{\alpha}\beta &  - \overline{\beta} -\overline{\alpha} z^{-1} \\
- \alpha z  - \beta  & \alpha \overline{\beta} + z^{-1}
\end{pmatrix},
$$
while
$$
S(\alpha,z) S(\beta,z)
=
\frac{1}{\rho_\alpha \rho_\beta}
\begin{pmatrix}
z^2 + \overline{\alpha} \beta z & -\overline{\beta}z - \overline{\alpha} \\
-\alpha z^2 - \beta z & \alpha \overline{\beta} z+ 1
\end{pmatrix}.
$$
In particular, we notice that
\begin{equation}\label{eq:szego:gz:rel}
S(\alpha,z) S(\beta,z)
=
z
Q(\alpha,z) P(\beta,z),
\end{equation}
for all $\alpha, \beta \in \D$ and all $z \in \C \setminus \{0\}$.

As in the introduction, let $ \Omega $ be compact, $T:\Omega \to \Omega$ a homeomorphism, and $f:\Omega \to \D$ continuous.  We obtain a family of extended CMV matrices $ (\E_\omega)_{\omega\in \Omega} $ where $\E_\omega$ has $ \alpha(n) = \alpha_\omega(n) = f(T^n\omega) $. There are then two families of cocycles which one can naturally associate to this family of operators, namely, the Szeg\H{o} cocycle, which was described in the introduction, and the Gesztesy-Zinchenko cocycle, which is induced by the Gesztesy-Zinchenko transfer matrices.  There is a minor annoyance, in that the Gesztesy-Zinchenko matrices alternate, but this is easily resolved by passing to blocks of length two.  More precisely, for $z \in \partial \D$, we define the Gesztesy-Zinchenko cocycle by
\[
G_z(\omega)
=
Q(f(T\omega),z) P(f(\omega),z)
=
Q( \alpha_\omega(1),z) P(\alpha_\omega(0),z).
\]

Given \eqref{eq:szego:gz:rel}, the following theorem is immediate.

\begin{theorem} \label{t:uh:equiv}
Let $\Omega$, $T$, and $f$ be as above and let $z \in \partial \D$ be given.  The following are equivalent.
\begin{enumerate}
\item $(T,A_z)$ is uniformly hyperbolic.
\item $(T^2,A_z^2)$ is uniformly hyperbolic.
\item $(T^2,G_z)$ is uniformly hyperbolic.
\end{enumerate}
\end{theorem}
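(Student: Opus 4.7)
The plan is to handle the two equivalences separately, both of which reduce to direct computations once the right observations are made.

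For (2) $\iff$ (3): The identity \eqref{eq:szego:gz:rel} yields $A_z^2(\omega) = z\,G_z(\omega)$ for every $\omega$, because
$$
A_z^2(\omega) = S(f(T\omega),z)\,S(f(\omega),z) = z\,Q(f(T\omega),z)\,P(f(\omega),z) = z\,G_z(\omega).
$$
A straightforward induction on $n \geq 0$ then shows $(A_z^2)^n(\omega) = z^n\,G_z^n(\omega)$ as cocycles over $T^2$, and the case $n < 0$ follows by passing to inverses together with $|z|=1$. Since $|z^n| = 1$, we obtain $\|(A_z^2)^n(\omega)\| = \|G_z^n(\omega)\|$ for all $n \in \Z$ and $\omega \in \Omega$. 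The equivalence of (2) and (3) then follows immediately from criterion (a) of Theorem~\ref{t.uh.splitting}, which characterizes uniform hyperbolicity solely in terms of the growth of these matrix norms.

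For (1) $\iff$ (2): This is the general fact that uniform hyperbolicity is invariant under passing to a power cocycle. The forward implication is trivial: if $\|A_z^n(\omega)\| \geq C\lambda^{|n|}$ for all $n \in \Z$ and $\omega \in \Omega$, then extracting the even subsequence gives $\|(A_z^2)^n(\omega)\| = \|A_z^{2n}(\omega)\| \geq C(\lambda^2)^{|n|}$, which is uniform exponential growth for $(T^2, A_z^2)$. For the converse, let $B = \max_\omega \|A_z(\omega)\|$, finite by continuity of $A_z$ and compactness of $\Omega$, and recall that $\|A_z(\omega)^{-1}\| = \|A_z(\omega)\| \leq B$ since $A_z(\omega) \in G$. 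Suppose $\|A_z^{2n}(\omega)\| \geq C'\mu^{|n|}$ for some $C' > 0,\mu > 1$ and all $n \in \Z, \omega \in \Omega$. For any odd integer $m$, write $m = 2n + r$ with $r \in \{\pm 1\}$; the cocycle identity $A_z^{2n}(\omega) = A_z^{-r}(T^m\omega)\,A_z^m(\omega)$ together with the bound $\|A_z^{-r}(T^m\omega)\| \leq B$ yields
$$
\|A_z^m(\omega)\| \geq B^{-1} \|A_z^{2n}(\omega)\| \geq B^{-1}C'\mu^{|n|},
$$
which combined with the even case gives an exponential lower bound of the form $\|A_z^m(\omega)\| \geq C''\nu^{|m|}$ with $\nu = \mu^{1/2}$, uniform in $m \in \Z$ and $\omega \in \Omega$. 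Hence $(T, A_z)$ exhibits uniform exponential growth.

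The only potential obstacle here is bookkeeping: one must confirm that the scalar factor in $A_z^2 = z\,G_z$ propagates correctly to inverses (which is immediate from $|z|=1$), and that the interpolation in (2) $\implies$ (1) handles both positive and negative odd indices uniformly (which is immediate from $\|A_z(\omega)^{\pm 1}\| \leq B$ and the cocycle identity \eqref{eq:cocyc:rev}). These steps are entirely routine, which is presumably why the authors describe the theorem as "immediate" given the key identity \eqref{eq:szego:gz:rel}.
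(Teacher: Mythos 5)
Your proof is correct and takes essentially the same approach the paper intends: the paper declares Theorem~\ref{t:uh:equiv} ``immediate'' from the scalar relation \eqref{eq:szego:gz:rel}, and you have simply supplied the details. You correctly observe that $A_z^2(\omega) = z\,G_z(\omega)$ implies $(A_z^2)^n = z^n G_z^n$ for all $n\in\Z$, so the two cocycles over $T^2$ have identical matrix norms (since $|z|=1$), giving (2)$\iff$(3) via criterion (a) of Theorem~\ref{t.uh.splitting}; and your argument for (1)$\iff$(2) — even case is extraction of a subsequence, odd case is interpolation using the uniform bound $\|A_z(\omega)^{\pm1}\|\leq B$ — is the standard and correct power-cocycle argument the paper leaves implicit.
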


\begin{proposition} \label{p:uh:spec:1}
Denote by $U$ the set of $z \in \partial \D $ so that the cocycles above are uniformly hyperbolic. Then $ \sigma(\E_\omega) \subseteq \partial \D \setminus U $ for every $\omega \in \Omega$.
\end{proposition}

\begin{proof}
Suppose $ (T^2,G_z) $ is uniformly hyperbolic. Fix $\omega \in \Omega$, suppose $u$ is a nontrivial solution to $ \E_\omega u = z u $, and set $v = \mathcal{M}_\omega u$. By uniform hyperbolicity, the norm of the vector $(u_n,v_n)$ grows exponentially fast on at least one half-line.  Since $u$ is polynomially bounded if and only if $v$ is polynomially bounded, it follows that $u$ is not a generalized eigenvector of $\E_\omega$. Consequently, $z \notin \mathcal{G}_\omega $, the set of generalized eigenvalues of $\E_\omega$.  Since $U$ is open by Corollary \ref{c:uh:open}, we have
$$
\sigma(\E_\omega)
=
\overline{\mathcal{G}_\omega}
\subseteq
\partial \D \setminus U,
$$
by Theorem \ref{t:schnol:cmv}(c).
\end{proof}

As a result of these considerations, we recover (by very elementary means) the Geronimo-Johnson theorem.

\begin{proof}[Proof of Theorem~\ref{t:minspec:johnson}]
The first claim is well-known and not hard to prove by a strong operator approximation argument using \cite[Theorem VIII.24]{reedsimon1}, for example.  By Proposition~\ref{p:uh:spec:1}, it suffices to prove $\partial \D \setminus U \subseteq \Sigma$ to complete the proof.  To that end, suppose $(T^2,G_z)$ is not uniformly hyperbolic. By Theorem \ref{t.uh.splitting}(c), there exist $\omega \in \Omega$ and $\textbf{u} \in \S^3 $ so that
$
\left\| G_z^n(\omega)
\textbf{u}
\right\|
\leq
1
$
for all $n \in \Z$.  Define sequences $u$ and $v$ by
$$
\begin{pmatrix}
u_{2n} \\ v_{2n}
\end{pmatrix}
:=
G_z^n(\omega) \textbf{u},
\quad
\begin{pmatrix}
u_{2n+1} \\ v_{2n+1}
\end{pmatrix}
:=
P(\alpha_{\omega}(2n),z) G_z^n(\omega) \textbf{u},
\quad
n \geq 0,
$$
and similar formulae on the left half line.  It is easy to see that $(u_n)_{n \in \Z}$ is a generalized eigenfunction of $\E_\omega$, which implies $z \in \sigma(\E_\omega) = \Sigma$.
\end{proof}

\vspace*{4pt}
\noindent \textbf{Remark.}  Theorem~\ref{t.uh.splitting} requires no assumptions on $T$. In particular, Theorem~\ref{t.uh.splitting} holds even when $T$ is not minimal. This comes into play indirectly in Theorem~\ref{t:uh:equiv}, for, if minimality were required, then we would need to explicitly assume minimality of both $T$ and $T^2$. In fact, the only place where we have used minimality is to get $\omega$-invariance of the spectrum. In general, if $T$ is not minimal, one can follow the arguments above to see that $\sigma(\E_\omega) = \partial \D \setminus U$ for any $\omega$ whose $T$-orbit is dense in $\Omega$.
\section*{Acknowledgments} J.\ F.\ wishes to thank Chris Marx and Ian Morris for helpful discussions and suggestions.

\end{document}